\newcommand{\kibitz}[2]{\ifnum\Comments=1\textcolor{#1}{#2} \fi \ignorespaces}
\title{A Non-Convex Separation Through an Alternative\\ Proof for the Supporting Hyperplane Theorem}
\author{
Ali Taherinassaj\thanks{Harvard University, Cambridge, MA USA}\\ali\_taherinassaj@g.harvard.edu\and
Yiling Chen\footnotemark[1]\\yiling@seas.harvard.edu}
\date{Oct 1, 2023}
\begin{document}

\maketitle

\begin{abstract}
    We present a concise proof for the supporting hyperplane theorem. We then observe that the proof not only establishes the supporting hyperplane theorem but also extends it to a hyperplane separation theorem for certain non-convex sets. The insight is that two sets are separable by a hyperplane if there is a point from whose perspective the two sets appear to be convex. This global topological to local vector space relationship anticipates generalizations to manifolds as a possible future direction. The result might also be of pedagogical interest as it re-wires the structure in which the supporting hyperplane theorem and its dependencies are usually presented in convex optimization and analysis textbooks. 
\end{abstract}

\section{Introduction}
    
    Many problems in machine learning, economics, and engineering often involve the use of optimization algorithms. Convex analysis is of particular theoretical interest because it provides a framework for analyzing the convergence rates of optimization algorithms, with the separating hyperplane theorem being central to this framework. Moreover, some algorithms, such as support vector machines \cite{vapnik}, directly utilize hyperplanes to classify data.
    
    Convex optimization textbooks, such as \cite{boyd_vandenberghe_2004}, traditionally follow a sequence where they first prove the separating hyperplane theorem and then establish the supporting hyperplane theorem and Farkas' lemma on top. However, a notable observation is that Farkas' lemma is a simpler theorem that can be proved linear algebraically by the Fourier–Motzkin Elimination (see \cite{ziegler}) and seems to precede separation theorems for arbitrary sets. In an effort to explore a more elementary basis, we aim to secure the theorem through a different, potentially more intuitive approach. 

    A slight re-organization reduces the separating hyperplane theorem to the supporting hyperplane theorem as done in \cite{Bertsekas2003ConvexAA} and they work with sequences and projection theorem to prove the supporting hyperplane theorem. We further reduce the supporting hyperplane theorem for arbitrary sets to the supporting hyperplane theorem for convex cones. Even though this small step makes no difference in the Euclidean space, it could provide the power to handle point-sets via their perspective cones\footnote{The two are translatable to each other via logarithmic map in Hadamard manifolds where the length-minimizing geodesics are unique.} that are subsets of the tangent space $T_pM$ of a point $p$ on a manifold $M$.

\section{Notation and Preliminaries}

Throughout the paper we denote the interior of a set $S$ by $\interior{(S)}$, its closure by $\cl{(S)}$, and its boundary by $\bd{(S)}$. For two sets $A$ and $B$, $A\setminus B$ denotes the set difference and $A\pm B$ are the Minkowski addition/subtraction.  We now define a few tools that we use in the main result.

\begin{definition}[Perspective Cone]\label{def:pcone}
    Let $S \subseteq \R^n$ be a set and $p \in \R^n$ be a point. Then the cone of $S$ from the perspective of $p$ is defined as the smallest cone with its tip on $p$ that encompasses $S$, i.e.
    \begin{align}
        \cone_p{(S)} := \{v \in \R^n \,|\, \exists\, t_v \in \R_{\geq 0}: t_v v \in S_p\}. 
    \end{align}
    where by $S_p$ we mean $S - p$. 
\end{definition}

\begin{remark}\label{rem:theRemark}
    When a point $p$ is a member of $S$, the perspective cone becomes trivial, i.e.
    \begin{align}
        \cone_p(S) = \R^n.
    \end{align}
\end{remark}

\section{Main Result}

\begin{theorem}[Supporting Hyperplane Theorem]\label{thm:support}
    Let $C \subseteq \R^n$ be a convex set and $p \not\in \interior{(C)}$, be a point outside the interior of $C$. There exists a hyperplane that passes through $p$ and contains $C$ in one of its half-spaces. 
\end{theorem}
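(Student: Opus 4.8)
The plan is to translate the problem so that $p$ sits at the origin and then to reduce the statement to its conic special case. Writing $C_p = C - p$, the hypothesis $p \notin \interior{(C)}$ becomes $0 \notin \interior{(C_p)}$, and a hyperplane through $p$ with $C$ on one side corresponds to a nonzero $a \in \R^n$ with $\langle a, x\rangle \le 0$ for every $x \in C_p$; the hyperplane $\{x : \langle a, x - p\rangle = 0\}$ then automatically passes through $p$ and keeps $C$ in the half-space $\langle a, x - p\rangle \le 0$. So it suffices to produce such a normal vector $a$. To do this I would replace $C_p$ by a closed convex cone $K$ with apex at the origin. Since $C$ is convex, the cone generated by $C_p$ is convex as well: a positive combination of scalings of points of a convex set is again a scaling of a convex combination, so its closure $K$ is a genuine closed convex cone containing $C_p$, and any linear functional nonpositive on $K$ is nonpositive on $C_p$.

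The payoff of passing to $K$ is that the supporting-hyperplane problem for a cone \emph{at its apex} admits a clean, sequence-free argument via the projection theorem. Concretely, if $K$ is a closed convex cone with $K \ne \R^n$, pick any $z \notin K$ and let $\hat z$ be its metric projection onto $K$. Setting $a = z - \hat z \ne 0$, the variational inequality characterizing the projection gives $\langle a, v - \hat z\rangle \le 0$ for all $v \in K$. Now the cone structure does the work: since $\lambda v \in K$ for every $\lambda \ge 0$, we get $\lambda\langle a, v\rangle \le \langle a, \hat z\rangle$, and letting $\lambda \to \infty$ forces $\langle a, v\rangle \le 0$ for all $v \in K$. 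This is exactly a supporting hyperplane through the apex, and because $C_p \subseteq K$ it supports $C_p$ at the origin, which is what we want.

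The main obstacle, and the place where $p \notin \interior{(C)}$ must genuinely be used, is verifying $K \ne \R^n$ (equivalently $0 \notin \interior{(K)}$). This is also where the degeneracy of Remark~\ref{rem:theRemark} becomes delicate: if $p \in \bd{(C)}$ then $0 \in C_p$ and the perspective cone $\cone_p(C)$ is all of $\R^n$ on the nose, so the reduction cannot be applied verbatim. I would therefore split into two regimes. If $p \notin \cl{(C)}$, a neighborhood of the origin misses $C_p$, and I expect the generated cone to be proper, so the cone case applies directly (indeed one can already project $p$ onto $\cl{(C)}$ here). If $p \in \bd{(C)}$, I would approximate from the exterior: choose $p_k \to p$ with $p_k \notin \cl{(C)}$, extract unit normals $a_k$ from the first regime satisfying $\langle a_k, x - p_k\rangle \le 0$ for all $x \in C$, and pass to a convergent subsequence $a_k \to a$ by compactness of the unit sphere; taking the limit yields $\langle a, x - p\rangle \le 0$ for all $x \in C$. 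The technical heart is thus establishing properness of the generated cone in the exterior case and controlling this limit in the boundary case.
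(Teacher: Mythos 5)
Your proposal is correct in outline, but it takes a genuinely different route from the paper's. The paper reduces the theorem to its conic special case via the perspective cone (Lem.~\ref{lem:simple}) and then settles that case with a projection-free, set-theoretic argument (Lem.~\ref{lem:mainLem}): a proper convex cone is enlarged by conic hulls until its complement is symmetric, at which point the boundary is a linear hyperplane. You set up the same conic reduction but then prove the conic case with the classical machinery --- metric projection onto a closed convex set, the variational inequality, and the $\lambda \to \infty$ scaling trick --- and you handle the apex-on-the-boundary degeneracy by approximating $p$ from the exterior and passing to a convergent subsequence of unit normals. That is essentially the Bertsekas-style argument the paper explicitly positions itself against, and your conic detour is in fact dispensable: your exterior case already projects onto $\cl{(C)}$ directly, and your boundary case never uses the cone. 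What your version buys is rigor at exactly the point where the paper is thinnest: when $p \in C \cap \bd{(C)}$, Remark~\ref{rem:theRemark} gives $\cone_p(C) = \R^n$, so Lem.~\ref{lem:mainLem} does not apply and the paper's two-line proof silently skips this case, whereas your limiting argument covers it (you should only add that exterior approximants $p_k$ exist because $\interior{(\cl{(C)})} = \interior{(C)}$ for convex $C$, and that the limit normal is nonzero because the $a_k$ are unit vectors, and you should firm up the hedged claim that the generated cone is proper in the exterior case --- which your own projection of $0$ onto $\cl{(C_p)}$ already does). What the paper's route buys, if Lem.~\ref{lem:mainLem} is made fully rigorous, is independence from projections and sequences, which is its stated motivation.
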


    \begin{figure}[!h]
        \centering
        \includegraphics[scale=0.25]{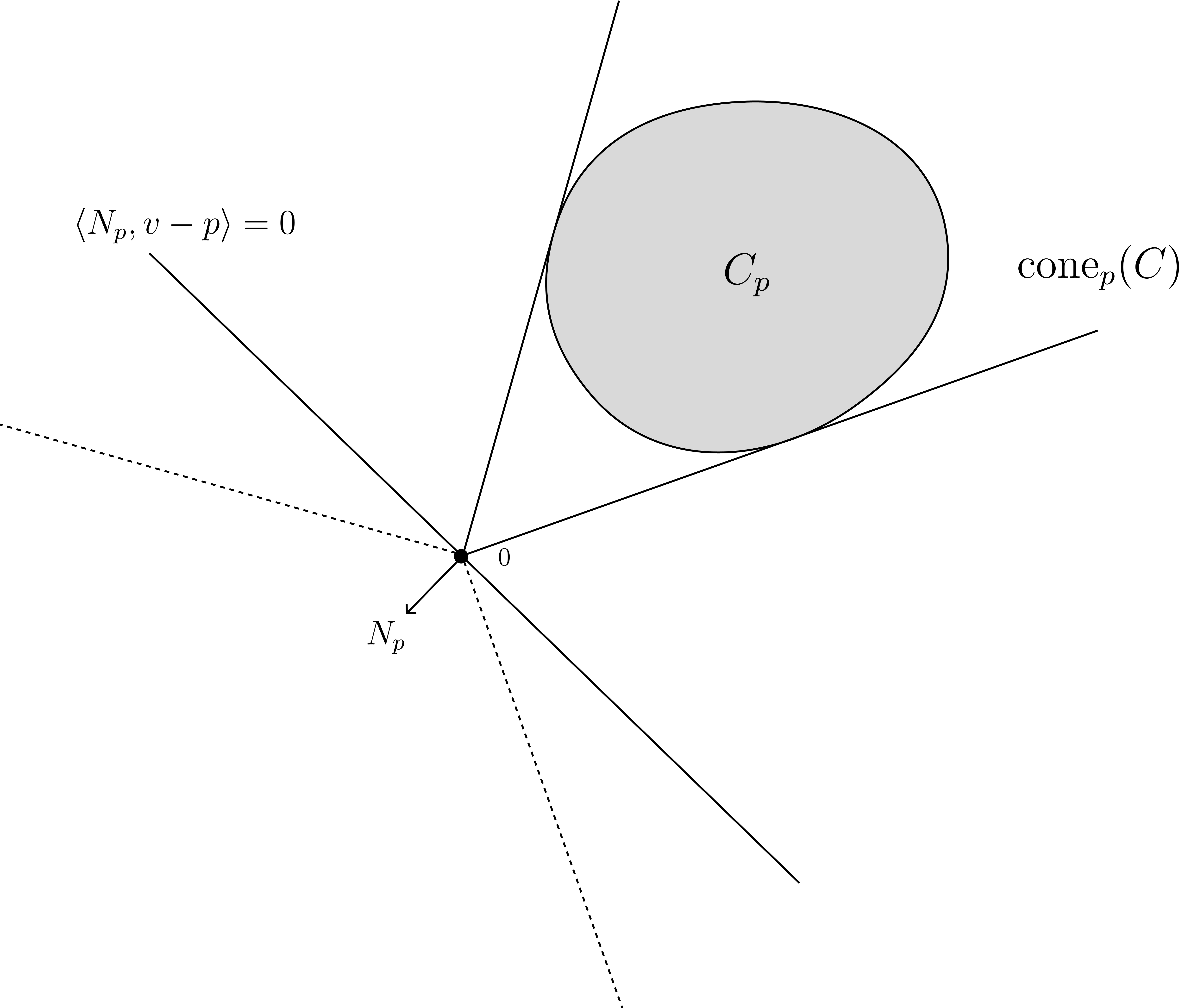}
        \caption{Illustration of the supporting hyperplane theorem (Thm. \ref{thm:support}).}
        \label{fig:support}
    \end{figure}

\begin{proof}

Assume $C$ is a proper and non-empty set\footnote{If $C$ were the whole space, there would be no point $p$ outside the interior of $C$ to begin with. Likewise, if $C$ were the empty set, it would always be a subset of any halfspace. In either case, the theorem would hold vacuously.} of $\R^n$. $C_p$ is a subset of $\cone_p(C)$ which itself is convex by Lem. \ref{lem:simple}; and $\cone_p{(C)}$ is a subset of a half-space because of its convexity by Lem. \ref{lem:mainLem}. 
    
\end{proof}

\begin{remark} A half-space passing through a point $p$ can be defined by a normal vector $N_p$:
    \begin{align}\label{eq:goodStuff}
        \exists N_p \in \R^n\setminus\{0\}: \forall v \in C: \langle N_p, v - p\rangle \leq 0.
    \end{align}
\end{remark}

\begin{remark}
    Notice that in the proof of Thm. \ref{thm:support} we only used the convexity of $C$ to imply the convexity of $\cone_p(C)$. If the $\cone_p(C)$ is convex without $C$ being convex, then Thm. \ref{thm:support} still holds. 
\end{remark}

\begin{theorem}[Generalized Separating Hyperplane Theorem]
    Let $A$ and $B$ be two arbitrary subsets of $\R^n$. If there is a point $p$ such that the two perspective cones $\cone_p(A)$ and $\cone_p(B)$ are convex and have disjoint interiors, then there is a hyperplane separating the two sets. 
\end{theorem}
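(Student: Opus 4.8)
The plan is to reduce the two-set separation to a single application of the supporting hyperplane theorem (Thm.~\ref{thm:support}) on an auxiliary cone. First I would translate coordinates so that $p$ sits at the origin, and write $K_A := \cone_p(A)$ and $K_B := \cone_p(B)$; by Def.~\ref{def:pcone} these are cones with tip at the origin satisfying $A - p \subseteq K_A$ and $B - p \subseteq K_B$, and by hypothesis both are convex with $\interior{(K_A)} \cap \interior{(K_B)} = \emptyset$. The key object is the difference cone
\[ K := K_A - K_B = \{\, a - b \,:\, a \in K_A,\ b \in K_B \,\}. \]
Since $-K_B$ is a convex cone and a Minkowski sum of convex cones is again a convex cone, $K$ is a convex cone with its tip at the origin, and $0 \in K$ because $0 \in K_A \cap K_B$. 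I would then apply Thm.~\ref{thm:support} to the convex set $K$ at the point $0$.

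To do so, the one condition to verify is $0 \notin \interior{(K)}$, and this is where the disjointness hypothesis enters; I expect it to be the main obstacle. The guiding observation is a cone fact: for a convex cone the tip lies in the interior if and only if the cone is all of $\R^n$, since a cone containing a neighborhood of $0$ is forced by positive scaling to fill the space. Hence it suffices to show $K \neq \R^n$. To connect this to the hypothesis I would invoke the Minkowski interior identity $\interior{(K_A - K_B)} = \interior{(K_A)} - \interior{(K_B)}$, valid for convex sets with nonempty interior, together with the elementary equivalence that $0 \in \interior{(K_A)} - \interior{(K_B)}$ holds if and only if $\interior{(K_A)} \cap \interior{(K_B)} \neq \emptyset$. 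The assumed disjointness then yields $0 \notin \interior{(K)}$ directly. The delicate points are the degenerate configurations: if one cone has empty interior the identity degenerates, and if either cone equals $\R^n$ (which by Rem.~\ref{rem:theRemark} happens precisely when $p$ lies in the corresponding set) disjointness forces the other cone to be flat. I would treat the full-dimensional, proper-cone case as the heart of the argument and dispatch the degenerate cases separately by passing to the affine hull.

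With $0 \notin \interior{(K)}$ in hand, Thm.~\ref{thm:support} (in the form of \eqref{eq:goodStuff}) supplies a normal $N \in \R^n \setminus \{0\}$ with $\langle N, w\rangle \leq 0$ for every $w \in K$. Unfolding $w = a - b$ gives $\langle N, a\rangle \leq \langle N, b\rangle$ for all $a \in K_A$ and $b \in K_B$, so $\sup_{a \in K_A} \langle N, a\rangle \leq \inf_{b \in K_B}\langle N, b\rangle$. Because $0$ belongs to both cones, the left-hand side is $\geq 0$ and the right-hand side is $\leq 0$, pinning both quantities to $0$; thus $\langle N, a\rangle \leq 0 \leq \langle N, b\rangle$ on the two cones. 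Finally, using $A - p \subseteq K_A$ and $B - p \subseteq K_B$ and translating back, the hyperplane $\{x : \langle N, x - p\rangle = 0\}$ places $A$ in the half-space $\langle N, x - p\rangle \leq 0$ and $B$ in $\langle N, x - p\rangle \geq 0$, which is the desired separation.
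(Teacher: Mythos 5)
Your argument is essentially the paper's own proof: both apply the supporting hyperplane theorem (Thm.~\ref{thm:support}) at the origin to the Minkowski difference $\cone_p(A)-\cone_p(B)$, which is a convex cone, and read off the separating inequality $\langle N_p, a\rangle \le \langle N_p, b\rangle$. You in fact go further than the paper on the one nontrivial step --- justifying that the origin is not interior to the difference cone, which the paper merely asserts --- and your only slip is the claim that $0\in\cone_p(A)\cap\cone_p(B)$, which by Def.~\ref{def:pcone} would force $p\in A\cap B$ and hence both cones equal to $\R^n$ by Rem.~\ref{rem:theRemark}; that claim is not needed, since the displayed inequality already yields a separating hyperplane, and its level can be pinned to $0$ by positive scaling within each cone rather than by membership of the origin.
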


\begin{figure}[!h]
    \centering
    \includegraphics[scale=0.25]{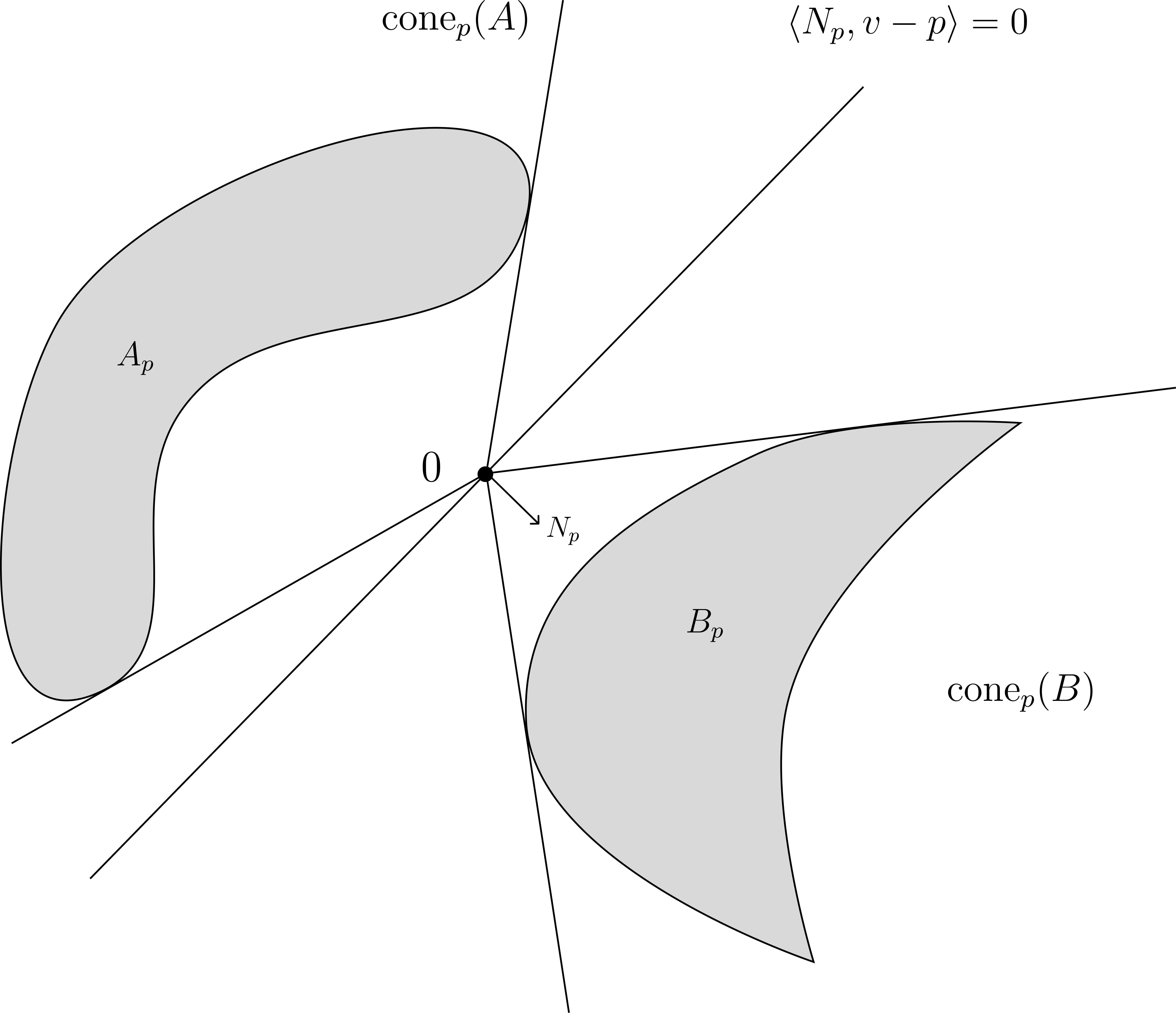}
    \caption{There could be many hyperplanes separating $A$ and $B$ as long as there is a point $p$ from whose perspective $A$ and $B$ appear to be convex.}
    \label{fig:separate-non-convex}
\end{figure}

\begin{proof}
    We can apply the supporting hyperplane theorem to  $C := \cone_p(A) - \cone_p(B)$ from the origin $p:=0$ as done in \cite{Bertsekas2003ConvexAA}. The set $C$ is convex because the Minkowski addition of two convex cones is convex. Moreover, the origin is on the boundary of $\cone_p(A) - \cone_p(B)$. This yields the existence of a hyperplane, i.e.
    \begin{align}
        \exists N_p \in \R^n \setminus \{0\}: \forall a \in A: \forall b \in B: \langle N_p, a\rangle \leq \langle N_p, b \rangle.
    \end{align}
\end{proof}

\section{Acknowledgments}

    A.T. would like to thank Nima Fasih for helpful conversations and their proofreading. This work was done as part of the course development for Harvard University's CS 128: Convex Optimization and Applications in Machine Learning taught by Dr. Yiling Chen. 

\nocite{*}
\printbibliography

\section{Appendix} 

Here we recall some basics of the cones and convexity. 

\begin{definition}[Cone]\label{def:cone}
    A set $K\subseteq \R^n$ is called a cone when it satisfies:
    \begin{align}
        \forall v\in K: \forall \theta \in \R_{> 0}: \theta v  \in K
    \end{align}
\end{definition}

\begin{lemma}[Convex Cone]\label{lem:convexcone}
    A cone $K\subseteq \R^n$ is convex if and only if:
    \begin{align}\label{eq:convexCones}
        \forall v, w \in K: v+w \in K
    \end{align}
\end{lemma}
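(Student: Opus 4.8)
The plan is to prove the biconditional by establishing each implication separately, in both cases leaning on the defining scaling property of a cone from Def.~\ref{def:cone} (namely $\theta v \in K$ for every $v \in K$ and every $\theta \in \R_{>0}$) together with the standard definition of convexity: $K$ is convex iff $\lambda v + (1-\lambda)w \in K$ for all $v,w \in K$ and all $\lambda \in [0,1]$.

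For the forward direction, I would assume $K$ is a convex cone and fix arbitrary $v,w \in K$. Applying convexity with $\lambda = \tfrac12$ gives the midpoint $\tfrac12 v + \tfrac12 w \in K$. Since $2 \in \R_{>0}$, the cone property lets me scale this point by $2$, yielding $v + w = 2\bigl(\tfrac12 v + \tfrac12 w\bigr) \in K$. This establishes closure under addition, i.e.\ Eq.~\eqref{eq:convexCones}.

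For the reverse direction, I would assume $K$ is a cone satisfying Eq.~\eqref{eq:convexCones} and fix arbitrary $v,w \in K$ and $\lambda \in [0,1]$; the goal is $\lambda v + (1-\lambda)w \in K$. When $\lambda \in (0,1)$, both $\lambda$ and $1-\lambda$ are strictly positive, so the cone property gives $\lambda v \in K$ and $(1-\lambda)w \in K$ individually, and then the closure hypothesis gives their sum $\lambda v + (1-\lambda)w \in K$. The boundary cases $\lambda = 0$ and $\lambda = 1$ are handled separately, since they return $w$ and $v$ respectively, both already in $K$.

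The only genuine subtlety, and the step I would flag as the main obstacle, is precisely those endpoint cases: because Def.~\ref{def:cone} quantifies over \emph{strictly} positive scalars $\theta \in \R_{>0}$, one cannot scale a point by $0$ within the cone axiom, so $\lambda = 0$ and $\lambda = 1$ cannot be absorbed into the cone-plus-addition argument and must be dispatched by directly invoking membership of the endpoints. Everything else is a routine combination of the two hypotheses, so I do not expect any further difficulty.
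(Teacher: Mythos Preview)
Your proof is correct and follows essentially the same approach as the paper: the reverse direction matches almost verbatim, including the separate treatment of the endpoint cases forced by the strict positivity in Def.~\ref{def:cone}. The only cosmetic difference is in the forward direction, where the paper first scales $v,w$ by $2$ and then averages, whereas you first average and then scale by $2$; these are the same two steps in opposite order.
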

\begin{proof}
    For the direct direction, if $v$ and $w$ are in the cone, then $2v$ and $2w$ are in the cone by Def. \ref{def:cone}. By convexity, the average of the two which is $v+w$ must thus be a member of the cone too.

    For the reverse direction we must show that for all two vectors $v$ and $w$, and for all $\theta \in [0, 1]$, the weighted average $\theta v + (1-\theta) w$ is in the cone as well given that the sum of any two vectors is in the cone. The cases of $\theta=0$ and $\theta \in (0, 1)$ must be treated separately. When $\theta=0$ the weighted combination happens to lie in the cone only by the virtue of the extra assumption that the vectors we began with were in the cone. It is for the other case that we can scale $v$ by $\theta > 0$ and $w$ by $(1-\theta)$ and still have the two vectors be in the cone to be able to sum them into $\theta v + (1-\theta) w$. 
\end{proof}

\begin{definition}[Conic Hull]
    The conic hull of a set $S\subseteq \R^n$ defined as:
    \begin{align}
        \chull{(S)} := \{\theta_v \cdot v + \theta_w \cdot w \,|\, v, w \in S \text{ and } \theta_v, \theta_w \in \R_{>0}\}
    \end{align}
    
    is the smallest convex cone that contains $S$. 
\end{definition}

\begin{lemma}\label{lem:simple}

Let $C \subseteq \R^n$ be a set. Then, convexity of $C$ implies that $\cone_p{(C)}$ is convex for any $p \in \R^n$.
\end{lemma}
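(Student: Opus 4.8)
The plan is to reduce convexity of the cone to the additive criterion of Lemma~\ref{lem:convexcone} and then exploit the convexity of the translate $C_p = C - p$. First I would confirm that $\cone_p(C)$ is genuinely a cone in the sense of Definition~\ref{def:cone}: if $v$ admits a witness $t_v \ge 0$ with $t_v v \in C_p$, then for any $\theta > 0$ the rescaled witness $t_v/\theta$ shows $\theta v \in \cone_p(C)$. Having this, by Lemma~\ref{lem:convexcone} it suffices to prove that $\cone_p(C)$ is closed under addition, which is a much lighter obligation than verifying all convex combinations directly.

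Before the additive step I would dispose of the degenerate case $p \in C$ using Remark~\ref{rem:theRemark}: there $\cone_p(C) = \R^n$, which is trivially convex. So I may assume $p \notin C$, which has the useful consequence that every witness is strictly positive, since a witness $t_v = 0$ would force $0 \in C_p$, i.e. $p \in C$, a contradiction. Hence for $v, w \in \cone_p(C)$ I obtain strictly positive $t_v, t_w$ with $a := t_v v \in C_p$ and $b := t_w w \in C_p$.

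The heart of the argument is to produce a single witness $t \ge 0$ certifying $v + w \in \cone_p(C)$, i.e. with $t(v+w) \in C_p$. Writing $v = a/t_v$ and $w = b/t_w$, I would look for $t$ making $t(v+w) = (t/t_v)a + (t/t_w)b$ a convex combination of $a$ and $b$; this forces $t/t_v + t/t_w = 1$, i.e. the harmonic-type choice $t = t_v t_w/(t_v + t_w)$. With this $t$ the coefficients $t/t_v$ and $t/t_w$ are nonnegative and sum to one, so convexity of $C_p$, a translate of the convex set $C$, places $t(v+w)$ in $C_p$, and therefore $v + w \in \cone_p(C)$.

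I expect the main obstacle to be the bookkeeping around the witness $t = 0$: the definition permits $t_v \in \R_{\ge 0}$, so one must ensure that the convex-combination trick only ever relies on strictly positive witnesses, which is precisely why isolating the $p \in C$ case at the outset is essential rather than cosmetic. Once positivity of the witnesses is secured, the remainder is the routine verification that the chosen $t$ yields legitimate convex weights and the appeal to Lemma~\ref{lem:convexcone}.
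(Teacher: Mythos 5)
Your proof is correct, and it reaches the conclusion by a genuinely different route from the paper's. The paper argues by contraposition --- assuming $\cone_p{(C)}$ is not convex, it exhibits a failed convex combination inside $C$ itself --- and it works with an arbitrary weight $\theta \in (0,1)$, rescaling it to $\tilde\theta = \theta t_w/(\theta t_w + (1-\theta)t_v)$. You instead give a direct proof: you verify that $\cone_p{(C)}$ satisfies Definition~\ref{def:cone} and then invoke Lemma~\ref{lem:convexcone} so that only closure under addition needs checking. The core computation is the same harmonic reweighting of the witnesses: your $t = t_v t_w/(t_v + t_w)$ is precisely the paper's combination specialized to $\theta = 1/2$, since there $\tilde\theta\,\tilde v + (1-\tilde\theta)\,\tilde w = \tfrac{t_v t_w}{t_v+t_w}(v+w)$. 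What your route buys is a lighter obligation (a single sum rather than a one-parameter family of convex combinations) and a constructive exhibition of the witness certifying $v+w \in \cone_p{(C)}$; what it costs is the dependence on Lemma~\ref{lem:convexcone} and the explicit check of the cone axiom, which the paper's contrapositive argument does not need. Both proofs dispose of the $p \in C$ case via Remark~\ref{rem:theRemark}, and both need $p \notin C$ to guarantee strictly positive witnesses --- a point you make explicit and the paper leaves implicit when it writes $t_v > 0$.
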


\begin{proof}
    When $p\in C$, we are done by Rem. \ref{rem:theRemark}. For the remainder, the proof is by contraposition. We will show that the non-convexity of $\cone_p{(C)}$ implies the non-convexity of $C$. From the non-convexity of $\cone_p{(C)}$, we know there are two distinct points $v, w \in \cone_p{(C)}$ and some $\theta \in (0, 1)$ such that $\theta v + (1-\theta) w \notin \cone_p{(C)}$. \\ \\
    What $v \in \cone_p{(C)}$ means is that $p+t_{v} v\in C$ for some $t_{v} > 0$. Similarly, $w \in \cone_p{(C)}$ means $p+t_{w} {w} \in C$ for some $t_{w} > 0$. Now let $\Tilde{v}=t_{v} v$ and $\Tilde{w}=t_{w} w$. Now, $\Tilde{v}, \Tilde{w}\in \cone_p{(C)}$. By setting $\Tilde{\theta} = \dfrac{\theta t_w}{\theta t_w + (1-\theta)t_v}$, we can construct a convex combination of $\Tilde{v}$ and $\Tilde{w}$: $$\Tilde{\theta} \Tilde{v} + (1- \Tilde{\theta}) \Tilde{w},$$
    which equals $\dfrac{t_v t_w}{\theta t_w + (1-\theta)t_v} (\theta v + (1-\theta)w)$. Since $\theta v + (1-\theta)w \notin \cone_p{(C)}$ and $\cone_p{(C)}$ is a cone, we know $\Tilde{\theta} \Tilde{v} + (1- \Tilde{\theta}) \Tilde{w} \notin \cone_p{(C)}$. This means for any $t>0$, $$p + t\,(\Tilde{\theta} \Tilde{v} + (1- \Tilde{\theta}) \Tilde{w}) \notin C.$$
    For $t=1$, the above is equivalent to 
    $$\Tilde{\theta} (p + \Tilde{v}) + (1-\Tilde{\theta})(p+\Tilde{w}) \notin C.$$ But both $p+\Tilde{v}$ and $p+\Tilde{w}$ are in $C$, because $\Tilde{v}, \Tilde{w} \in \cone_p{(C)}$ (as $v, w \in \cone_p{(C)}$). This implies the non-convexity of $C$.
\end{proof}

\begin{lemma}\label{lem:interior}
    The interior of the complement of a proper convex set $C \subset \R^n$ is non-empty. 
\end{lemma}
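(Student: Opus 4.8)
The plan is to exhibit an open ball contained in $\R^n \setminus C$; equivalently, to show that $C$ is not dense, i.e. that $\cl{(C)} \neq \R^n$. I would split the argument according to the dimension of the affine hull of $C$, treating separately the case in which $C$ has empty interior and the case in which it does not.

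First, suppose $\interior{(C)} = \emptyset$. A standard fact about convex sets in $\R^n$ is that a convex set has nonempty interior exactly when its affine hull is all of $\R^n$ (since an $n$-dimensional affine hull supplies $n+1$ affinely independent points whose simplex has nonempty interior). Hence in this case the affine hull $\mathrm{aff}(C)$ is a proper affine subspace, so $C$ is contained in some hyperplane $H = \{x \in \R^n : \langle a, x\rangle = c\}$ with $a \neq 0$. The open half-space $\{x : \langle a, x\rangle > c\}$ is then a nonempty open set disjoint from $C$, so it lies inside $\R^n \setminus C$ and already witnesses that the interior of the complement is nonempty.

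Second, suppose $\interior{(C)} \neq \emptyset$, and fix $x_0 \in \interior{(C)}$ together with a point $q \notin C$, which exists because $C$ is proper. I would run the ray $r(t) = x_0 + t(q - x_0)$ for $t \geq 0$ and set $s = \sup\{t \geq 0 : r(t) \in C\}$; convexity makes $\{t : r(t) \in C\}$ an interval containing a neighborhood of $0$, and $0 < s \leq 1$ since $x_0$ is interior while $r(1) = q \notin C$. The claim is that every $r(t)$ with $t > s$ lies in the interior of the complement. I would argue by contradiction: if some such $r(t)$ belonged to $\cl{(C)}$, then the line-segment principle for convex sets — the open segment joining an interior point to a point of the closure lies in the interior — would place the points $r(\tau)$ for $\tau \in (s, t)$ back inside $\interior{(C)} \subseteq C$, contradicting the definition of $s$. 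Hence $r(t) \notin \cl{(C)}$, so a small ball around $r(t)$ avoids $C$.

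The main obstacle is the line-segment principle invoked in the second case, which must itself be justified: given $x_0$ with $B(x_0,\varepsilon) \subseteq C$ and a point $y \in \cl{(C)}$, one takes points of $C$ arbitrarily close to $y$ and convex-combines them with the ball around $x_0$ to produce an open ball about each point of the open segment $(x_0, y)$. Everything else is routine bookkeeping about the exit parameter $s$; the only genuine subtlety is confirming that the two cases are exhaustive, which is precisely the affine-hull dichotomy used at the start.
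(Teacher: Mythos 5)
Your proof is correct, but it takes a genuinely different route from the paper's. The paper argues by contradiction: assuming $\interior{(C^c)} = \emptyset$, it asserts that every $v \in C^c$ admits a $w$ with $v+w \in C$ and $v-w \in C$, so that $v = \frac{1}{2}(v+w)+\frac{1}{2}(v-w)$ violates convexity. You instead argue directly, splitting on whether $\interior{(C)}$ is empty: if it is, the affine-hull dichotomy places $C$ inside a hyperplane and an open half-space witnesses the claim; if not, you run a ray from an interior point $x_0$ through an exterior point $q$ and use the line-segment principle to show that every point past the exit parameter $s$ lies outside $\cl{(C)}$, hence has a ball around it in $C^c$. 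Your route imports two standard convexity facts (the affine-hull/interior dichotomy and the segment principle, which you rightly flag as the one step needing its own proof), so it is less self-contained than the paper's short argument; in exchange it is airtight and proves the stronger statement that $\cl{(C)} \neq \R^n$. The comparison is instructive because the paper's pivotal step --- deducing the existence of the symmetric pair $v \pm w \in C$ from the emptiness of $\interior{(C^c)}$ --- is not fully justified as written: the negation of that existence only says that for each $w$ at least one of $v+w$, $v-w$ lies in $C^c$, which does not place a whole ball around $v$ inside $C^c$. Your argument closes exactly that gap, at the cost of more machinery.
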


\begin{proof}
    Suppose that $C^c$ has an empty interior. For any point $v \in C^c$, there must exist a $w \in \R^n$ such that $v +w \in C$ and $v -w \in C$. If there was no such $w$, it would have meant that for all vectors $w$ around including open balls of arbitrary radii $\epsilon$ around $v$ (i.e. $\operatorname{ball}_v(\epsilon):=\{w \mid\|w\| \leq \epsilon\}$) we had $v+w \in C^c$ which would contradict our assumption of empty interior for $C^c$.

    Now that there is such a vector $w$ such that $v+w \in C$ and $v-w \in C$, it is easy to see that the convexity of $C$ is violated because $v = \frac{1}{2}(v+w) + \frac{1}{2}(v-w)$ is not in $C$. Therefore it is impossible for $C^c$ to have an empty interior. 
\end{proof}

\begin{lemma}\label{lem:mainLem}
    Every proper\footnote{A cone that is a proper subset of $\R^n$.} convex cone $K \neq \R^n$ is a subset of a half-space. 
\end{lemma}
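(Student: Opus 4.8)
The plan is to produce the bounding normal by projecting a suitable external point onto the cone, following the projection-theorem route used in \cite{Bertsekas2003ConvexAA}. If $K$ is empty it lies in any half-space, so assume $K \neq \emptyset$. Since $K$ is in particular a proper convex set, Lem.~\ref{lem:interior} guarantees a point $q \in \interior{(K^c)}$; because some open ball around $q$ avoids $K$ entirely, $q$ cannot be a limit of points of $K$, so $q \notin \cl{(K)}$. The set $\cl{(K)}$ is a nonempty closed convex cone (closure preserves both convexity and the scaling property of Def.~\ref{def:cone}), so I would project $q$ onto it: let $p_0$ be the nearest point of $\cl{(K)}$ to $q$, and set $N := q - p_0$, which is nonzero since $q \notin \cl{(K)}$.

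The first key step is the variational characterization of the projection, namely
\begin{align}
    \forall v \in \cl{(K)}: \langle q - p_0, v - p_0\rangle \leq 0.
\end{align}
The second key step is to exploit the cone structure to remove the dependence on $p_0$ on the right-hand side. Since $0 \in \cl{(K)}$ (a limit of $\theta v'$ as $\theta \to 0^+$ for any $v' \in K$) and $2p_0 \in \cl{(K)}$, substituting $v = 0$ and $v = 2p_0$ into the inequality pins down $\langle q - p_0, p_0\rangle = 0$. Substituting $v = \theta v'$ for an arbitrary $v' \in \cl{(K)}$ and $\theta > 0$ then gives $\theta\langle q - p_0, v'\rangle \leq \langle q - p_0, p_0\rangle = 0$, and letting $\theta$ range over $\R_{>0}$ forces $\langle N, v'\rangle \leq 0$. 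Hence
\begin{align}
    \forall v \in K \subseteq \cl{(K)}: \langle N, v\rangle \leq 0,
\end{align}
which exhibits $K$ as a subset of the half-space $\{x \in \R^n : \langle N, x\rangle \leq 0\}$ through the origin.

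The main obstacle is the projection theorem on which everything rests: the existence and uniqueness of the nearest point $p_0$ together with the inequality above. Existence follows by taking a minimizing sequence for $\|q - \cdot\|$ over $\cl{(K)}$ and using closedness together with the fact that the sequence may be confined to a compact ball; uniqueness follows from the parallelogram law, which makes $\|q - \cdot\|^2$ strictly convex; and the variational inequality follows by differentiating $t \mapsto \|q - (p_0 + t(v - p_0))\|^2$ at $t = 0^+$ along the segment toward any competitor $v \in \cl{(K)}$. Passing to the closure is precisely what licenses the theorem, and the remaining manipulations are the elementary consequences of homogeneity recorded above.
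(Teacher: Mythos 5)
Your proof is correct, but it is not the paper's proof --- in fact it is precisely the classical projection-theorem argument (existence and uniqueness of the nearest point of $\cl{(K)}$ to an exterior point $q$, the variational inequality $\langle q-p_0, v-p_0\rangle \leq 0$, and the substitutions $v=0$, $v=2p_0$, $v=\theta v'$ to kill the $p_0$ term) that the paper's introduction explicitly sets out to avoid, citing it as the route taken in \cite{Bertsekas2003ConvexAA}. The paper instead argues by expansion and maximality: starting from a point of $\interior{(K^c)}$ supplied by Lem.~\ref{lem:interior}, it repeatedly enlarges $K$ to $\chull(\{v\}\cup K)$ for any $v$ with both $v,-v \in K^c$, claims that a maximally expanded cone satisfies $-K^c = \interior{(K)}$, and then identifies the boundary $\cl{(K)}\cap\cl{(K^c)}$ as a set closed under addition and scalar multiplication, hence an $(n-1)$-dimensional linear subspace whose two sides give the desired half-space. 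The trade-off is real: your route relies on completeness of $\R^n$ (minimizing sequences, the parallelogram law) but is fully rigorous and hands you an explicit normal $N = q - p_0$; the paper's route is more topological and is the version the authors hope will transfer to tangent cones on manifolds, but as written it leaves gaps your argument does not have (the assertion that $-v \notin \chull(\{v\}\cup K)$, the implicit appeal to a maximality principle in ``can always be expanded,'' and the unproven jump to ``$(n-1)$-dimensional''). So your proposal is a valid and in some ways tighter proof of the lemma, just one that runs against the stated purpose of the paper, which is to obtain the supporting hyperplane theorem without the projection machinery.
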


\begin{proof} 
    
    By Lem. \ref{lem:interior} we know $K^c$ has a non-empty interior. If there is a vector $v \in K^c$, such that  $-v \in K^c$ as well, then it is possible to take the conic hull of $\{v\}$ and $K$. After this operation, $-v$ cannot belong to \(\chull(\{v\} \cup K)\). Therefore, a convex cone $K$ can always be expanded unless there are no vectors $v \in K^c$ such that $-v \in K^c$.

    The fact that for every vector in $K^c$ we have its opposite inside $K$ makes $-K^c$ identical to $\interior{(K)}$ which means $K^c$ is now itself a convex cone. As a result, the boundary $\cl{(K)} \cap \cl{(K^c)}$ is the intersection of two convex cones and is therefore convex. 
    
    The boundary or the set of points $\{v \in K| -v \in K\}$ is closed under scalar action on top of being closed under addition by virtue of being a convex cone. This means that the boundary is an $(n-1)$-dimensional vector subspace, i.e. a hyperplane. 
    
    

\end{proof}

\end{document}